\documentclass[11pt,a4paper]{amsart}

\usepackage{amsmath,amssymb,amsthm,mathtools}
\usepackage[T1]{fontenc}
\usepackage{enumitem}

\numberwithin{equation}{section}

\theoremstyle{plain}
\newtheorem{theorem}{Theorem}[section]

\newtheorem{lemma}[theorem]{Lemma}

\theoremstyle{definition}

\theoremstyle{remark}
\newtheorem{remark}[theorem]{Remark}

\DeclareMathOperator{\E}{\mathbb{E}}
\newcommand{\R}{\mathbb{R}}
\newcommand{\F}{\mathbb{F}}
\newcommand{\cF}{\mathcal{F}}
\newcommand{\Pbb}{\mathbb{P}}
\newcommand{\abs}[1]{\lvert#1\rvert}

\newcommand{\coloneq}{\mathrel{\mathop:}=}
\newcommand{\qv}[1]{\langle #1\rangle}
\newcommand{\qvc}[2]{\langle #1,#2\rangle}

\title[Carleson embedding with constant \(e\)]{A stochastic Carleson embedding theorem with constant \(e\)\\ and the vector of Riesz transforms}

\author[Domelevo]{Komla Domelevo}
\address{Institut f{\"u}r Mathematik, Julius-Maximilians-Universit{\"a}t
W{\"u}rzburg, Emil-Fischer-Str.~41, 97074 W{\"u}rzburg, Germany}
\email{komla.domelevo@uni-wuerzburg.de}

\author[Fladung]{Johanna Fladung}
\address{Institut f{\"u}r Mathematik, Julius-Maximilians-Universit{\"a}t
W{\"u}rzburg, Emil-Fischer-Str.~41, 97074 W{\"u}rzburg, Germany}
\email{johanna.fladung@uni-wuerzburg.de}

\author[Kakaroumpas]{Spyridon Kakaroumpas}
\address{Institut f{\"u}r Mathematik, Julius-Maximilians-Universit{\"a}t
W{\"u}rzburg, Emil-Fischer-Str.~41, 97074 W{\"u}rzburg, Germany}
\email{spyridon.kakaroumpas@uni-wuerzburg.de}

\author[Montobbio]{Paul Montobbio}
\address{Ecole Normale Supérieure, Paris, France}
\email{paul.montobbio@ens.psl.eu}

\author[Petermichl]{Stefanie Petermichl}
\address{Institut f{\"u}r Mathematik, Julius-Maximilians-Universit{\"a}t
W{\"u}rzburg, Emil-Fischer-Str.~41, 97074 W{\"u}rzburg, Germany}
\email{stefanie.petermichl@uni-wuerzburg.de}

\begin{document}

\begin{abstract}
We prove a continuous-time Carleson embedding theorem with constant \(e\) for a system of square-integrable continuous martingales whose quadratic covariations mimic the generalised Cauchy--Riemann relations. The argument is based on a Bellman function and a multidimensional It\^o formula. As an application we transfer the estimate to the upper half-space via the Gundy--Varopoulos representation and obtain a Carleson embedding, still with constant \(e\), for the vector consisting of a function and its Riesz transforms in arbitrary dimension.
\end{abstract}

\maketitle

\section{Introduction}

The classical Carleson embedding theorem asserts that a positive measure \(\mu\) on the upper half-space is Carleson if and only if the Poisson extension realises a bounded map from \(L^2(\R^n)\) into \(L^2(\mu)\). In the dyadic and martingale settings the same phenomenon is captured by a Carleson sequence (or process) and an embedding of the form
\[
\E\Bigl[\sum_I \alpha_I \abs{\E[f\mid \cF_I]}^2\Bigr]
\le
C\,\E[\abs{f}^2],
\]
with a vast literature on sharp constants going back to the Bellman function approach of Nazarov--Treil--Volberg \cite{NazTre1996,NV2004}. See also the book of Vasyunin--Volberg \cite{VasVol2020}.

The present note has two aims. First, we establish a continuous-time analogue for a \emph{system} of martingales rather than for a single scalar martingale. The driving processes \(X^0,\dots,X^n\) are assumed orthogonal and equivariant,
\[
\qvc{X^k}{X^\ell}=0\quad (k\neq \ell),
\qquad
\qv{X^k}=\qv{X^0},
\]
and the coordinate processes \(u^0,\dots,u^n\) are built from a common family of integrands by the linear combinations that formally reproduce the Cauchy--Riemann structure of a conjugate system in \(\R^{n+1}\). Under the continuous Carleson condition
\begin{equation}
\label{eq:C-intro}
M_t
\coloneq
\E\Bigl[\int_t^\infty \alpha_s\,ds\Bigm|\cF_t\Bigr]
\le 1
\qquad\text{a.s.,}
\end{equation}
and the assumption that $M$ has a similar structure with respect to the driving processes, we prove
\begin{equation}
\label{eq:CET-intro}
\E\Bigl[\int_0^\infty \alpha_s\sum_{k=0}^n (u^k_s)^2\,ds\Bigr]
\le
e\cdot
\E\Bigl[\sum_{k=0}^n (u^k_\infty)^2\Bigr].
\end{equation}
The constant \(e\) arises from the explicit Bellman function
\[
B(\tilde u^0,\dots,\tilde u^n,\tilde M,\tilde F)
=
e\tilde F-e^{1-\tilde M}\sum_{k=0}^n (\tilde u^k)^2
\]
on the natural state domain \(\sum (\tilde u^k)^2\le \tilde F\), \(0\le \tilde M\le 1\). After an application of It\^o calculus the second-order form reduces to an algebraic quadratic form in the integrands, which is nonnegative for every dimension \(n\ge 1\).

Secondly, we specialize the driving system to Brownian motion, or more precisely the Gundy--Varopoulos backgroung radiation process in the upper half-space \(\R^{n+1}_+=\R^n\times(0,\infty)\) constructed in~\cite{GV1979}. The coordinate processes of this process satisfy the orthogonality and equivariance hypotheses, and the Gundy--Varopoulos construction realizes each Riesz transform \(R_k f\) as the conditional expectation, given the exit point on \(\partial\R^{n+1}_+\), of a martingale of precisely the form \(u^k\) above. The Green potential of a density \(\alpha\) on \(\R^{n+1}_+\) supplies the representation of the Carleson martingale \(N\) required by the stochastic theorem. Consequently~\eqref{eq:CET-intro} becomes a Carleson embedding for the vector \((f,R_1f,\dots,R_nf)\), with the same constant \(e\) and with no dimensional loss.

The paper is organized as follows. Section~\ref{sec:stochastic} states and proves the stochastic Carleson embedding. Section~\ref{sec:GV} begins by recalling the Gundy--Varopoulos formula on the upper half-space and proceeds checking that the geometric hypotheses of Section~\ref{sec:stochastic} are satisfied. In particular, a variant of the Gundy--Varopolous representation on the upper half space away from the boundary is proved, see Lemma~\ref{lem:pairing}, which in contrast to the pointwise representation on the boundary it has the form of a pairing. Finally, Subsection~\ref{sec:riesz} combines the aforementioned results to draw the analytic corollary for the Riesz vector.

\section{The stochastic Carleson embedding}
\label{sec:stochastic}

\subsection{Setup and statement}

Let \((\Omega,\cF,\F,\Pbb)\) be a filtered probability space, with \(\F=(\cF_t)_{t\ge 0}\) satisfying the usual conditions (right-continuity and completeness; see~\cite{Klenke2020,KaratzasShreve1991}). Fix an integer \(n\ge 1\). For each \(k=0,\dots,n\), let \(X^k=(X^k_t)_{t\ge 0}\) be a continuous square-integrable real martingale, adapted to \(\F\), and assume the orthogonality and equivariance conditions
\begin{equation}
\label{eq:ortho}
\qvc{X^k}{X^\ell}=0
\quad\text{for all }k\neq \ell,
\qquad
\qv{X^k}=\qv{X^0}
\quad\text{for all }k=0,\dots,n.
\end{equation}
(The angle brackets denote the quadratic (co)variation in the sense of~\cite{KaratzasShreve1991}.) In addition, for each \(k=0,\dots,n\) let \(a^k=(a^k_t)_{t\ge 0}\) be a progressively measurable process such that
\[
\E\Bigl[\int_0^\infty \abs{a^k_s}^2\,d\qv{X^k}_s\Bigr]<\infty.
\]

Define the continuous \(L^2\)-bounded martingales
\begin{align}
\label{eq:u0}
u^0_t
&\coloneq
u^0_0
+\sum_{k=0}^n \int_0^t a^k_s\,dX^k_s,
\\
\label{eq:uk}
u^k_t
&\coloneq
u^k_0
+\int_0^t a^k_s\,dX^0_s
-\int_0^t a^0_s\,dX^k_s,
\qquad k=1,\dots,n,
\end{align}
where the initial values \(u^k_0\) are fixed random variables in \(L^2\), and the stochastic integrals are understood in the \(L^2\) theory of~\cite{Protter2003}. Boundedness in \(L^2\) implies almost-surely  the \(L^2\) convergence of each \(u^k\) to a terminal variable \(u^k_\infty\in L^2\) \cite{Klenke2020}.

Let \(\alpha=(\alpha_t)_{t\ge 0}\) be a nonnegative progressively measurable process. We assume the probabylistic analogue of the packing condition,
\begin{equation}
\label{eq:C}
M_t
\coloneq
\E\Bigl[\int_t^\infty \alpha_s\,ds\Bigm|\cF_t\Bigr]
\le 1
\qquad\text{a.s.}\; \text{for all }t\ge 0\ 
\tag{C}
\end{equation}
One may write
\[
M_t
=
N_t-\int_0^t \alpha_s\,ds,
\qquad
N_t
=
\E\Bigl[\int_0^\infty \alpha_s\,ds\Bigm|\cF_t\Bigr],
\]
so that \(N\) is the closed martingale associated with the total mass of \(\alpha\). We make the structural assumption that this martingale lies in the stable subspace generated by \(X^0,\dots,X^n\): namely
\begin{equation}
\label{eq:N-rep}
N_t
=
N_0
+\sum_{k=0}^n \int_0^t m^k_s\,dX^k_s,
\end{equation}
where \(N_0\in L^1\) and each \(m^k\) is progressively measurable and satisfies
\[
\E\Bigl[\int_0^t \abs{m^k_s}^2\,d\qv{X^k}_s\Bigr]<\infty
\qquad\text{for every }t\ge 0.
\]

\begin{theorem}
\label{thm:CET}
Under the assumptions above,
\begin{equation}
\label{eq:CET}
\E\Bigl[\int_0^\infty \alpha_s\sum_{k=0}^n (u^k_s)^2\,ds\Bigr]
\le
e\cdot
\E\Bigl[\sum_{k=0}^n (u^k_\infty)^2\Bigr].
\tag{CET}
\end{equation}
\end{theorem}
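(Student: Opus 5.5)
We run the Bellman function \(B(\tilde u,\tilde M,\tilde F)=e\tilde F-e^{1-\tilde M}\abs{\tilde u}^2\) from the introduction along the processes
\[
u_t=(u^0_t,\dots,u^n_t),\qquad M_t,\qquad F_t\coloneq\E\Bigl[\sum_k (u^k_\infty)^2\Bigm|\cF_t\Bigr],
\]
and show that \(Z_t\coloneq B(u_t,M_t,F_t)+\int_0^t\alpha_s\abs{u_s}^2\,ds\) is a supermartingale. By conditional Jensen, \(\abs{u_t}^2\le F_t\). By \eqref{eq:C}, \(0\le M_t\le1\). Hence \(e^{1-M_t}\in[1,e]\) and \(B\ge(e-e^{1-M_t})F_t\ge0\). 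Granting the supermartingale property, we get \(\E Z_t\le \E Z_0\le e\,\E F_0=e\,\E\abs{u_\infty}^2\). Since \(B\ge0\), this gives \(\E\int_0^t\alpha\abs{u}^2\,ds\le e\,\E\abs{u_\infty}^2\), and monotone convergence as \(t\to\infty\) yields \eqref{eq:CET}.

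To compute \(dZ\) we apply the multidimensional Itô formula, using \(dM=dN-\alpha\,dt\) with \(N\) given by \eqref{eq:N-rep}. Since \(F\) is a martingale and \(B\) is linear in \(\tilde F\), the \(F\)-term contributes only a martingale. The drift from \(-\alpha\,dt\) in \(M\) is \(\partial_{\tilde M}B\cdot(-\alpha)\,dt=-e^{1-M}\abs{u}^2\alpha\,dt\), and this is dominated by the added \(+\alpha\abs{u}^2\,dt\) because \(e^{1-M}\ge1\). This is where \(M\le1\) is used. The remaining drift is \(-e^{1-M}\) times the quadratic form
\[
Q\coloneq\sum_k d\qv{u^k}-4\sum_k u^k\,d\qvc{u^k}{N}+\abs{u}^2 d\qv{N}.
\]
We compute it with \eqref{eq:ortho}, writing \(d\qv{X^k}=d\qv{X^0}\eqqcolon d\tau\). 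Then:
\begin{itemize}[label={}]
\item \(d\qv{u^0}=\sum_k (a^k)^2\,d\tau\);
\item \(d\qv{u^k}=\bigl((a^k)^2+(a^0)^2\bigr)d\tau\) for \(k\ge1\);
\item \(d\qvc{u^0}{N}=\sum_k a^km^k\,d\tau\);
\item \(d\qvc{u^k}{N}=(a^km^0-a^0m^k)\,d\tau\) for \(k\ge1\);
\item \(d\qv{N}=\abs{m}^2d\tau\).
\end{itemize}

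The main obstacle is proving \(Q\ge0\) as a quadratic form. Since \(Q\) is quadratic in \(a\), a plain Cauchy–Schwarz bound \(4\abs{\text{cross}}\le 2\sum d\qv{u^k}+2\abs u^2\abs m^2\) falls short by a factor 2, so the Cauchy–Riemann structure must be exploited. First, \(\sum_k d\qv{u^k}=\bigl(2\abs{a}^2+(n-1)(a^0)^2\bigr)d\tau\ge 2\abs a^2\,d\tau\). Second, the cross term equals \(4\langle a,Lm\rangle\,d\tau\), where \(L\) is the linear map
\[
(Lm)^0=\sum_k u^km^k,\qquad (Lm)^k=u^0m^k\ \ (k\ge 1)\ \text{ in the }a^k\text{ slot},
\]
with the contributions \(u^km^0\) to the \(a^k\) slot and \(-u^km^k\) to the \(a^0\) slot coming from \(d\qvc{u^k}{N}\). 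The claim to verify is
\[
\abs{\text{cross}}\le\sqrt2\,\abs a\,\abs u\,\abs m\cdot\sqrt2,
\]
that is, \(\abs{L}\le\abs u\) in operator norm. This plausibly holds because \(L\) is essentially multiplication by the "Clifford/quaternion-like" element \(u\) (conjugate-structure matrices are \(\abs u\) times orthogonal). With this bound, \(Q\ge 2\abs a^2-4\abs a\abs u\abs m+2\abs u^2\abs m^2=2(\abs a-\abs u\abs m)^2\ge0\) (using \(\abs u^2\abs m^2\le 2\abs u^2\abs m^2\)).

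If the operator norm bound fails for \(n\ge2\), we would instead use the slack \((n-1)(a^0)^2\) and the spare \(\abs u^2\abs m^2\), and check positivity by completing squares in \(a^0\) and in \((a^k)_{k\ge1}\) separately.

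Finally, we localize by stopping times to make the local martingales true martingales, and justify the limits using the \(L^2\) assumptions and the boundedness of \(e^{1-M}\).
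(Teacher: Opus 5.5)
Your architecture is the paper's: the same Bellman function, It\^o's formula, $e^{1-M}\ge1$ to absorb the $\alpha$-drift, $B\ge0$, then $t\to\infty$. However, the one nontrivial step, positivity of the second-order form, is not established, and as written it is wrong. From $\partial_{\tilde u^k}^2\tilde B=2e^{1-\tilde M}$, $\partial_{\tilde u^k}\partial_{\tilde M}\tilde B=-2e^{1-\tilde M}\tilde u^k$ and $\partial_{\tilde M}^2\tilde B=e^{1-\tilde M}\abs{\tilde u}^2$, the second-order It\^o terms are $\tfrac12e^{1-M}\bigl[2\sum_k d\qv{u^k}-4\sum_k u^k\,d\qvc{u^k}{N}+\abs u^2\,d\qv N\bigr]$. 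Your $Q$ has coefficient $1$ instead of $2$ on $\sum_k d\qv{u^k}$. That form is genuinely indefinite. For $n=1$ one has $\abs{Lm}=\abs u\abs m$ and $\sum_k d\qv{u^k}=2\abs a^2\,d\qv{X^0}$, so the choice $a=Lm$ gives $Q=-\abs u^2\abs m^2\,d\qv{X^0}$. Your closing step masks this by invoking $\abs u^2\abs m^2\le2\abs u^2\abs m^2$, which runs the wrong way: enlarging a positive term of $Q$ yields an upper bound for $Q$, not a lower one.

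Second, the operator bound $\lVert L\rVert\le\abs u$ carries the entire estimate, yet you only call it plausible, on the grounds that $L$ is $\abs u$ times an orthogonal matrix. That is false for $n\ge2$. The correct entries are $(Lm)^0=u^0m^0-\sum_{k\ge1}u^km^k$ and $(Lm)^k=u^0m^k+u^km^0$; your formula for $(Lm)^0$ is garbled. Then $L=u^0I+K$ with $K$ skew-symmetric of rank two, and $L^{T}L$ has eigenvalue $(u^0)^2$ on an $(n-1)$-dimensional subspace. The bound itself is true, but it needs the computation
\[
\abs u^2\abs m^2-\abs{Lm}^2=\sum_{1\le k<\ell\le n}\bigl(u^km^\ell-u^\ell m^k\bigr)^2\ge0,
\]
which is exactly the paper's Lagrange identity. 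With both repairs, the corrected form per unit of $d\qv{X^0}$ is $4\abs a^2+2(n-1)(a^0)^2-4\langle a,Lm\rangle+\abs u^2\abs m^2\ge(2\abs a-\abs u\abs m)^2\ge0$. Without Cauchy--Schwarz it equals $\abs{2a-Lm}^2+2(n-1)(a^0)^2+\bigl(\abs u^2\abs m^2-\abs{Lm}^2\bigr)$, which is precisely the paper's decomposition \eqref{eq:Q-pos}. So your route is sound and coincides with the paper's, but the proposal as submitted does not prove $Q\ge0$.
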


\subsection{Bellman function}

Let
\[
F_t
\coloneq
\E\Bigl[\sum_{k=0}^n (u^k_\infty)^2\Bigm|\cF_t\Bigr].
\]
Jensen's inequality \cite{Klenke2020} yields, for every \(t\ge 0\),
\[
\sum_{k=0}^n (u^k_t)^2
=
\sum_{k=0}^n \E[u^k_\infty\mid\cF_t]^2
\le F_t
\qquad\text{a.s.}
\]
Together with \(0\le M\le 1\) a.s., this suggests the state domain
\[
D
=
\Bigl\{
(\tilde u^0,\dots,\tilde u^n,\tilde M,\tilde F)=\tilde V\in\R^{n+3}
:
0\le \tilde M\le 1,\
\sum_{k=0}^n (\tilde u^k)^2\le \tilde F
\Bigr\}
\]
and the Bellman function
\[
B(\tilde V)
\coloneq
e\tilde F-e^{1-\tilde M}\sum_{k=0}^n (\tilde u^k)^2,
\qquad \tilde V\in D.
\]
It immediately follows that \(0\le B(\tilde V)\le e\tilde F\) on \(D\).

Write \(V_t=(u^0_t,\dots,u^n_t,M_t,F_t)\in D\). The map \(t\mapsto F_t\) needs not be continuous, so we apply It\^o's formula not to \(B(V_t)\) but to the continuous semimartingale
\[
\tilde B(W_t)
\coloneq
e^{1-M_t}\sum_{k=0}^n (u^k_t)^2,
\qquad
W_t=(u^0_t,\dots,u^n_t,M_t).
\]
The original Bellman function is then recovered by the elementary relation \(B(\tilde V)=e\tilde F-\tilde B(\tilde W)\).

\subsection{It\^o expansion}

Each \(u^k\) is a continuous martingale, and the Doob--Meyer decomposition of \(M\) reads \(M_t=N_t-\int_0^t\alpha_s\,ds\) \cite{KaratzasShreve1991}. The multidimensional It\^o formula \cite{KaratzasShreve1991,Protter2003} therefore gives
\begin{align*}
\tilde B(W_t)-\tilde B(W_0)
&=
-\int_0^t \partial_{\tilde M}\tilde B(W_s)\,\alpha_s\,ds
+\int_0^t \partial_{\tilde M}\tilde B(W_s)\,dN_s
\\
&\quad
+\sum_{k=0}^n\int_0^t \partial_{\tilde u^k}\tilde B(W_s)\,du^k_s
\\
&\quad
+\frac12\sum_{k=0}^n\int_0^t \partial_{\tilde u^k}\partial_{\tilde M}\tilde B(W_s)\,d\qvc{u^k}{N}_s
\\
&\quad
+\frac12\sum_{j=0}^n\int_0^t \partial_{\tilde M}\partial_{\tilde u^j}\tilde B(W_s)\,d\qvc{N}{u^j}_s
\\
&\quad
+\frac12\sum_{k,j=0}^n\int_0^t \partial_{\tilde u^k}\partial_{\tilde u^j}\tilde B(W_s)\,d\qvc{u^k}{u^j}_s
\\
&\quad
+\frac12\int_0^t \partial_{\tilde M}^2\tilde B(W_s)\,d\qv{N}_s.
\end{align*}
From the definition of \(\tilde B\) one has \(\partial_{\tilde u^k}\partial_{\tilde u^j}\tilde B=0\) whenever \(k\neq j\). Using the symmetry of the quadratic covariation we obtain
\begin{align*}
\tilde B(W_t)-\tilde B(W_0)
&=
-\int_0^t \partial_{\tilde M}\tilde B(W_s)\,\alpha_s\,ds
+\int_0^t \partial_{\tilde M}\tilde B(W_s)\,dN_s
\\
&\quad
+\sum_{k=0}^n\int_0^t \partial_{\tilde u^k}\tilde B(W_s)\,du^k_s
\\
&\quad
+\sum_{k=0}^n\int_0^t \partial_{\tilde u^k}\partial_{\tilde M}\tilde B(W_s)\,d\qvc{u^k}{N}_s
\\
&\quad
+\frac12\sum_{k=0}^n\int_0^t \partial_{\tilde u^k}^2\tilde B(W_s)\,d\qv{u^k}_s
\\
&\quad
+\frac12\int_0^t \partial_{\tilde M}^2\tilde B(W_s)\,d\qv{N}_s.
\end{align*}
Taking expectations, all martingale terms vanish and we are left with
\begin{equation}
\label{eq:expectation}
\E\bigl[\tilde B(W_t)-\tilde B(W_0)\bigr]
=
-\E\Bigl[\int_0^t \partial_{\tilde M}\tilde B(W_s)\,\alpha_s\,ds\Bigr]
+\frac12\E[H],
\end{equation}
where
\begin{equation}
\label{eq:H}
\begin{aligned}
H
&=
2\sum_{k=0}^n\int_0^t \partial_{\tilde u^k}\partial_{\tilde M}\tilde B(W_s)\,d\qvc{u^k}{N}_s
\\
&\quad
+\sum_{k=0}^n\int_0^t \partial_{\tilde u^k}^2\tilde B(W_s)\,d\qv{u^k}_s
+\int_0^t \partial_{\tilde M}^2\tilde B(W_s)\,d\qv{N}_s.
\end{aligned}
\end{equation}
The remainder of the proof consists in showing that \(H\ge 0\) almost surely.

\subsection{Quadratic (co)variations}

We record the brackets implied by~\eqref{eq:ortho} and by the definitions~\eqref{eq:u0}--\eqref{eq:uk} and~\eqref{eq:N-rep}. First,
\begin{align*}
\qv{u^0}_t
&=
\sum_{k,\ell=0}^n
\qvc{\int_0^\bullet a^k_s\,dX^k_s}{\int_0^\bullet a^\ell_s\,dX^\ell_s}_t
\\
&=
\sum_{k=0}^n\int_0^t (a^k_s)^2\,d\qv{X^k}_s
=
\int_0^t \sum_{k=0}^n (a^k_s)^2\,d\qv{X^0}_s,
\end{align*}
the cross terms vanishing by orthogonality and the last step using equivariance. Likewise,
\[
\qv{N}_t
=
\int_0^t\sum_{k=0}^n (m^k_s)^2\,d\qv{X^0}_s
\]
and
\[
\qvc{u^0}{N}_t
=
\int_0^t\sum_{k=0}^n a^k_s m^k_s\,d\qv{X^0}_s.
\]
For \(1\le k\le n\),
\begin{align*}
\qv{u^k}_t
&=
\qvc{\int_0^\bullet a^k_s\,dX^0_s-\int_0^\bullet a^0_s\,dX^k_s}{\,\cdot\,}_t
\\
&=
\int_0^t\bigl((a^0_s)^2+(a^k_s)^2\bigr)\,d\qv{X^0}_s,
\end{align*}
while
\begin{align*}
\qvc{u^k}{N}_t
&=
\int_0^t a^k_s m^0_s\,d\qv{X^0}_s
-\int_0^t a^0_s m^k_s\,d\qv{X^k}_s
\\
&=
\int_0^t\bigl(a^k_s m^0_s-a^0_s m^k_s\bigr)\,d\qv{X^0}_s.
\end{align*}

\subsection{The quadratic form}

The relevant derivatives of \(\tilde B\) are
\begin{align*}
\partial_{\tilde u^k}\tilde B
&=
2e^{1-\tilde M}\tilde u^k,
&
\partial_{\tilde u^k}^2\tilde B
&=
2e^{1-\tilde M},
\\
\partial_{\tilde M}\tilde B
&=
-e^{1-\tilde M}\sum_{j=0}^n (\tilde u^j)^2,
&
\partial_{\tilde M}^2\tilde B
&=
e^{1-\tilde M}\sum_{j=0}^n (\tilde u^j)^2,
\\
\partial_{\tilde u^k}\partial_{\tilde M}\tilde B
&=
-2e^{1-\tilde M}\tilde u^k.
\end{align*}
Substituting these expressions together with the brackets computed above, we find
\[
H
=
\int_0^t e^{1-M_s}\,Q_s\,d\qv{X^0}_s,
\]
where the process \(Q\) is given (omitting the time variable) by
\begin{equation}
\label{eq:Q}
\begin{aligned}
Q
&=
2\sum_{k=0}^n (a^k)^2
+2\sum_{k=1}^n\bigl((a^0)^2+(a^k)^2\bigr)
+\Bigl(\sum_{k=0}^n (u^k)^2\Bigr)\Bigl(\sum_{k=0}^n (m^k)^2\Bigr)
\\
&\qquad
-4u^0\sum_{k=0}^n a^k m^k
-4\sum_{k=1}^n u^k\bigl(a^k m^0-a^0 m^k\bigr).
\end{aligned}
\end{equation}
Since \(\qv{X^0}\) is a.s.\ increasing, the associated Lebesgue--Stieltjes measure it induces is nonnegative. Thus \(H\ge 0\) a.s.\ as soon as \(Q\ge 0\) a.s..

\subsection{Nonnegativity of \(Q\)}

Rearranging~\eqref{eq:Q} gives
\begin{align*}
Q
&=
2(n+1)(a^0)^2
-4a^0\Bigl(u^0 m^0-\sum_{k=1}^n u^k m^k\Bigr)
\\
&\quad
+\Bigl(\sum_{k=0}^n (u^k)^2\Bigr)\Bigl(\sum_{k=0}^n (m^k)^2\Bigr)
\\
&\quad
+4\sum_{k=1}^n (a^k)^2
-4\sum_{k=1}^n a^k\bigl(u^0 m^k+u^k m^0\bigr).
\end{align*}
We split the leading coefficient as \(2(n+1)=2(n-1)+4\) and complete squares. The last three terms on the second line already form perfect squares:
\begin{align*}
&4\sum_{k=1}^n (a^k)^2
-4\sum_{k=1}^n a^k\bigl(u^0 m^k+u^k m^0\bigr)
+\sum_{k=1}^n \bigl(u^0 m^k+u^k m^0\bigr)^2
\\
&\qquad=
\sum_{k=1}^n\bigl[2a^k-(u^0 m^k+u^k m^0)\bigr]^2.
\end{align*}
Next we use the Lagrange identity
\[
\Bigl(\sum_{k=0}^n c_k^2\Bigr)\Bigl(\sum_{k=0}^n d_k^2\Bigr)
-\Bigl(\sum_{k=0}^n c_k d_k\Bigr)^2
=
\sum_{0\le k<\ell\le n}(c_k d_\ell-c_\ell d_k)^2
\]
with the choice \(c_0=u^0\), \(c_k=-u^k\) for \(k\ge 1\), and \(d_k=m^k\). A short rearrangement produces
\begin{align*}
&\Bigl(\sum_{k=0}^n (u^k)^2\Bigr)\Bigl(\sum_{k=0}^n (m^k)^2\Bigr)
-\sum_{k=1}^n\bigl(u^0 m^k+u^k m^0\bigr)^2
\\
&\qquad=
\Bigl(u^0 m^0-\sum_{k=1}^n u^k m^k\Bigr)^2
+\sum_{1\le k<\ell\le n}(-u^k m^\ell+u^\ell m^k)^2.
\end{align*}
A further perfect square is
\begin{align*}
&4(a^0)^2
-4a^0\Bigl(u^0 m^0-\sum_{k=1}^n u^k m^k\Bigr)
+\Bigl(u^0 m^0-\sum_{k=1}^n u^k m^k\Bigr)^2
\\
&\qquad=
\Bigl[2a^0-\Bigl(u^0 m^0-\sum_{k=1}^n u^k m^k\Bigr)\Bigr]^2.
\end{align*}
Collecting terms we arrive at
\begin{equation}
\label{eq:Q-pos}
\begin{aligned}
Q
&=
2(n-1)(a^0)^2
+\sum_{k=1}^n\bigl[2a^k-(u^0 m^k+u^k m^0)\bigr]^2
\\
&\quad
+\Bigl[2a^0-\Bigl(u^0 m^0-\sum_{k=1}^n u^k m^k\Bigr)\Bigr]^2
+\sum_{1\le k<\ell\le n}(-u^k m^\ell+u^\ell m^k)^2.
\end{aligned}
\end{equation}
Every summand is nonnegative as soon as \(n\ge 1\). Hence \(Q\ge 0\) and \(H\ge 0\) almost surely.

\subsection{Conclusion}

Returning to~\eqref{eq:expectation} we obtain
\[
\E\bigl[\tilde B(W_t)-\tilde B(W_0)\bigr]
\ge
-\E\Bigl[\int_0^t \partial_{\tilde M}\tilde B(W_s)\,\alpha_s\,ds\Bigr]
\qquad\text{for all }t\ge 0.
\]
Since \(B(V)=eF-\tilde B(W)\) and \(F\) is a martingale (so \(\E[F_t]=\E[F_0]\)),
\begin{align*}
\E\bigl[B(V_0)-B(V_t)\bigr]
&=
\E\bigl[\tilde B(W_t)-\tilde B(W_0)\bigr]
\\
&\ge
\E\Bigl[\int_0^t e^{1-M_s}\Bigl(\sum_{k=0}^n (u^k_s)^2\Bigr)\alpha_s\,ds\Bigr].
\end{align*}
The bounds \(B\ge 0\) and \(M\le 1\) (hence \(e^{1-M}\ge 1\)) therefore yield
\[
\E\bigl[B(V_0)\bigr]
\ge
\E\Bigl[\int_0^t \alpha_s\sum_{k=0}^n (u^k_s)^2\,ds\Bigr]
\qquad\text{for all }t\ge 0.
\]
Letting \(t\to\infty\) and using \(B(\tilde V)\le e\tilde F\) together with the definition of \(F_0\), we conclude
\[
\E\Bigl[\int_0^\infty \alpha_s\sum_{k=0}^n (u^k_s)^2\,ds\Bigr]
\le
\E\bigl[B(V_0)\bigr]
\le
e\cdot\E\bigl[F_0\bigr]
=
e\cdot\E\Bigl[\sum_{k=0}^n (u^k_\infty)^2\Bigr],
\]
which is~\eqref{eq:CET}.

\begin{remark}
The term \(2(n-1)(a^0)^2\) in~\eqref{eq:Q-pos} is the only place where the dimension enters the algebraic identity. It is nonnegative precisely when \(n\ge 1\), which is the range relevant for Riesz transforms on \(\R^n\).
\end{remark}

\section{The Gundy--Varopoulos representation on the upper half-space}
\label{sec:GV}

Let \(f:\R^n\to\R\) be a Schwartz function, and let
\[
u(x,y)=P_yf(x)
\]
be its Poisson extension to the upper half-space
\begin{equation*}
    \R^{n+1}_+:=\{(x,y):x\in\R^n,\,y>0\}
\end{equation*}
For each \(k=1,\dots,n\) write \(R_kf\) for the \(k\)-th Riesz transform on \(\R^n\), with Fourier multiplier \(-i\xi_k/\abs{\xi}\), and let
\[
v_k(x,y)=P_y(R_kf)(x).
\]
When a single index is fixed we also write \(v=v_k\).

Let \(B=(B_t)_{t\ge 0}\) be Gundy--Varopoulos~\cite{GV1979} background radiation in \(\R^{n+1}_+\). Here, we only outline the basic properties of this stochastic process and do not give the details and instead refer the reader to~\cite{GV1979}. Thus, $B$ is Brownian motion with generator \(\tfrac12\Delta\), started from height \(a\to\infty\) with Lebesgue measure on the horizontal variables, and stopped at the first hitting time \(\tau\) of \(\{y=0\}\). We refer to~\cite{KaratzasShreve1991} for a comprehensive exposition on Brownian motion. The occupation formula for this process reads
\begin{equation}
    \label{eq:occupation}
    \E\int_0^\tau F(B_s)\,ds
=
\int_{\R^{n+1}_+} F(x,y)\,2y\,dy\,dx    
\end{equation}
for nonnegative Borel measurable functions \(F\) on $\R^{n+1}_{+}$, whenever either side is finite.

The coordinate processes of \(B\) realize a driving system as in Section~\ref{sec:stochastic}: they are pairwise orthogonal continuous martingales with a common quadratic variation \(t\wedge\tau\). Write \(\nabla=(\partial_y,\partial_{x_1},\dots,\partial_{x_n})\) for the full gradient and let \(A_k\) be the linear map on \(\R^{n+1}\) determined by
\[
A_k e_k=e_0,
\qquad
A_k e_0=-e_k,
\qquad
A_k e_\ell=0\text{ for }\ell\neq 0,k,
\]
where \(e_0\) is the vertical basis vector in the direction of \(y\). Set
\[
M_f(t)=\int_0^t \nabla u(B_s)\cdot dB_s,
\qquad
M_f^{(k)}(t)=\int_0^t A_k\nabla u(B_s)\cdot dB_s.
\]
These correspond precisely to the processes \(u^0\) and \(u^k\) of~\eqref{eq:u0}--\eqref{eq:uk}, with integrands \(a^0=\partial_y u(B)\) and \(a^\ell=\partial_{x_\ell}u(B)\). In the background-radiation limit one has \(u\to 0\) at infinity, hence the pathwise identity \(M_f(t)=u(B_t)\).

Let \(\varphi\) be a smooth real-valued function on \(\R^{n+1}_+\) with \(\Delta\varphi\ge 0\), and assume that the left-hand side of~\eqref{eq:riesz-CE} below is finite. Set \(\alpha_s=\Delta\varphi(B_s)\) along the background-radiation path. We assume that this density satisfies the Carleson condition~\eqref{eq:C}, and that the associated closed martingale \(N_t=\E\bigl[\int_0^\tau\alpha_s\,ds\bigm|\cF_t\bigr]\) admits the representation~\eqref{eq:N-rep} in the coordinate processes of \(B\). (If \(G(x,y)=\E^{x,y}\int_0^\tau\Delta\varphi(B_s)\,ds\) denotes the Dirichlet Green potential of \(\Delta\varphi\), then \(M_t=G(B_{t\wedge\tau})\) and the integrands of \(N\) are the components of \(\nabla G(B)\).)

\begin{theorem}
\label{thm:riesz-CE}
Under the assumptions above,
\begin{equation}
\label{eq:riesz-CE}
\begin{aligned}
&\int_{\R^{n+1}_+}
\Delta\varphi(x,y)\Bigl((P_yf)^2+\sum_{k=1}^n(P_y R_kf)^2\Bigr)\,2y\,dy\,dx
\\
&\qquad\le
e\cdot
\E\Bigl[M_f(\tau)^2+\sum_{k=1}^n M_f^{(k)}(\tau)^2\Bigr].
\end{aligned}
\end{equation}
\end{theorem}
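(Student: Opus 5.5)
We apply Theorem~\ref{thm:CET} to the driving system \(X^0=B^{(y)}\), \(X^k=B^{(x_k)}\) (the vertical and horizontal coordinates of the background radiation, stopped at \(\tau\)), with integrands \(a^0=\partial_y u(B)\), \(a^\ell=\partial_{x_\ell}u(B)\), density \(\alpha_s=\Delta\varphi(B_s)\mathbf 1_{\{s<\tau\}}\), and vanishing initial values \(u^k_0=0\). The right-hand side of~\eqref{eq:riesz-CE} is then exactly \(e\,\E[\sum_k (u^k_\infty)^2]\). The theorem's hypotheses hold as follows.
\begin{itemize}[leftmargin=2em]
\item Orthogonality and equivariance~\eqref{eq:ortho}: the coordinates of \(B\) are independent Brownian motions stopped at \(\tau\), so \(\qv{X^k}=t\wedge\tau\) for every \(k\) and \(\qvc{X^k}{X^\ell}=0\) for \(k\neq\ell\).
\item Integrability of the \(a^k\): by~\eqref{eq:occupation}, \(\E\int_0^\tau\abs{\nabla u(B_s)}^2ds=\int \abs{\nabla u}^2\,2y\,dy\,dx\), and this equals \(\int f^2\,dx\) (the Littlewood--Paley identity), which is finite for Schwartz \(f\).
\item Conditions~\eqref{eq:C} and~\eqref{eq:N-rep}: these are exactly the standing assumptions on \(\alpha\).
\item Structure of the \(u^k\): by the definition of \(A_k\), \(A_k\nabla u\cdot dB = \partial_{x_k}u\,dB^{(y)}-\partial_y u\,dB^{(x_k)}\). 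This is precisely the form~\eqref{eq:uk}, while \(M_f\) has the form~\eqref{eq:u0}.
\end{itemize}
Theorem~\ref{thm:CET} therefore gives
\[
\E\int_0^\tau \Delta\varphi(B_s)\Bigl(M_f(s)^2+\sum_{k=1}^n M_f^{(k)}(s)^2\Bigr)ds\le e\,\E\Bigl[M_f(\tau)^2+\sum_{k=1}^n M_f^{(k)}(\tau)^2\Bigr].
\]

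It remains to identify the left-hand side with the left-hand side of~\eqref{eq:riesz-CE}. For the \(u^0\)-term this is immediate: since \(M_f(s)=u(B_s)\) pathwise, the occupation formula~\eqref{eq:occupation} with \(F=\Delta\varphi\,u^2\ge0\) yields \(\int\Delta\varphi\,(P_yf)^2\,2y\,dy\,dx\).

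The \(u^k\)-terms are the main obstacle, because \(M_f^{(k)}(s)\) is \emph{not} a function of \(B_s\). What the Gundy--Varopoulos mechanism gives is only a conditional statement: \(v_k(B_s)=\E[M^{(k)}_f(\tau)\mid B_s]\), or more generally \(\E[M_f^{(k)}(s)\mid B_s]=v_k(B_s)\), in the form of the pairing of Lemma~\ref{lem:pairing}. This suffices for an inequality in the needed direction. By conditional Jensen,
\[
\E[\Delta\varphi(B_s)M_f^{(k)}(s)^2]\ge\E[\Delta\varphi(B_s)\,\E[M_f^{(k)}(s)\mid B_s]^2]=\E[\Delta\varphi(B_s)v_k(B_s)^2],
\]
using \(\Delta\varphi\ge0\). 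Integrating in \(s\) (Fubini, legitimate by nonnegativity and the finiteness assumption) and applying~\eqref{eq:occupation} to \(F=\Delta\varphi\,v_k^2\), the \(k\)-th term of the stochastic left-hand side dominates \(\int\Delta\varphi\,(P_yR_kf)^2\,2y\,dy\,dx\).

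The key point to verify is therefore \(\E[M_f^{(k)}(s)\mathbf 1_{\{s<\tau\}}\,g(B_s)]=\E[v_k(B_s)\mathbf 1_{\{s<\tau\}}\,g(B_s)]\) for test functions \(g\), interior to the half-space; that is, the pairing form of Lemma~\ref{lem:pairing}. I would prove it by time reversal of background radiation: reversing from \(B_s\) makes the martingale \(\int_0^s A_k\nabla u\cdot dB\) pair against Brownian paths ending at the point \(B_s\). One then computes via Itô that \(A_k\nabla u\) is the gradient of \(v_k\) up to the conjugacy relations \(\partial_{x_k}u=\partial_y v_k\) and \(\partial_y u=-\partial_{x_k}v_k\) modulo terms of zero mean, and finally checks the boundary behaviour as the starting height \(a\to\infty\). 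Summing the \(u^0\)- and \(u^k\)-bounds gives~\eqref{eq:riesz-CE}.
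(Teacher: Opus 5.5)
Your architecture matches the paper's, with the two steps in the opposite order. You apply Theorem~\ref{thm:CET} to $(M_f,M_f^{(1)},\dots,M_f^{(n)})$, and you dominate the occupation integrals of $v_k^2$ by those of $(M_f^{(k)})^2$ through a conditional-expectation argument. Your checks of the hypotheses of Theorem~\ref{thm:CET}, including the Littlewood--Paley bound for the $a^k$, are fine.

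\textbf{The gap is the interior identity.} The step that carries all the content of Lemmas~\ref{lem:occupation} and~\ref{lem:pairing} is not established. You treat $\E[M_f^{(k)}(s)\mid B_s]=v_k(B_s)$ as something the Gundy--Varopoulos mechanism supplies. Gundy--Varopoulos only identifies the conditional expectation given the exit point $B_\tau$; the interior statement has to be proved, and your sketch would not prove it.

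The sketch relies on $\partial_y u=-\partial_{x_k}v_k$, which fails for $n\ge 2$. On the Fourier side the symbols are $-\abs{\xi}e^{-y\abs{\xi}}$ and $-\xi_k^2\abs{\xi}^{-1}e^{-y\abs{\xi}}$, and only the summed relation $-\sum_k\partial_{x_k}v_k=\partial_y u$ holds. The sketch also ignores that $\partial_{x_\ell}v_k\neq 0$ for $\ell\neq k$, whereas those slots of $A_k\nabla u$ vanish. If your relations held you would have $\nabla v_k=A_k\nabla u$ and $M_f^{(k)}=v_k(B)$ pathwise, which is only the case $n=1$.

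``Modulo terms of zero mean'' is exactly what must be shown. The mismatch field $A_k\nabla u-\nabla v_k$ is nonzero, and one needs a reason why it is invisible to pairings against functions of position. The paper gets this using only $\operatorname{div}(A_k\nabla u)=0$ and $(A_k\nabla u)_y=\partial_{x_k}u=\partial_y v_k$:
\begin{itemize}
\item With $U$ the Dirichlet Green potential of $g$, one has $\int_r^\tau g(B_s)\,ds=U(B_r)+\int_r^\tau\nabla U(B_s)\cdot dB_s$.
\item The left side therefore becomes $\int 2y\,A_k\nabla u\cdot\nabla U=-2\int U\,\partial_{x_k}u$ by the divergence theorem.
\item The right side becomes $-2\int U\,\partial_y v_k$ by Green's identity.
\item The half-ball boundary terms vanish by decay of $U$, $\nabla U$, $u$ and $v_k$. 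No time reversal enters.
\end{itemize}

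\textbf{The fixed-time identity is the wrong object.} At finite starting height $a$ it is false as written. At $s=0$ the left side is $0$ because $M_f^{(k)}(0)=0$, while $\E[v_k(B_0)g(B_0)]$ need not vanish; in general a correction $-\E[v_k(B_0)\mid B_s]$ appears. In the limit $a\to\infty$ it is vacuous for each fixed $s$, since $B_s$ then sits at height about $a$, away from the support of $g$.

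What is meaningful and sufficient is the time-integrated pairing of Lemma~\ref{lem:pairing}. Your Jensen step should be run under the occupation measure $d\Pbb\otimes ds$ on $\{s<\tau\}$, not at each fixed $s$. This is what the paper does: it takes $g=\eta_\ell\Delta\varphi\,v_k$, expands $(M_f^{(k)}-v_k(B))^2\ge 0$, and lets $\eta_\ell\uparrow 1$.

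A fixed-time route can be repaired. Under the horizontally Lebesgue initial law, the density of $B_r$ depends only on $y$. So pairing the horizontal, divergence-free field $A_k\nabla u-\nabla v_k$ with $\nabla$ of the killed heat semigroup applied to $g$ gives zero after a horizontal integration by parts. This yields $\E[M_f^{(k)}(s)g(B_s)\mathbf{1}_{\{s<\tau\}}]=\E[(v_k(B_s)-v_k(B_0))g(B_s)\mathbf{1}_{\{s<\tau\}}]$. You would then have to carry the $v_k(B_0)$ term through Jensen and the $s$-integration, and show it disappears as $a\to\infty$.
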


The proof proceeds in two steps. First one compares the Poisson extensions \((u,v_k)\) with the Gundy--Varopoulos martingales \((M_f,M_f^{(k)})\) by an occupation pairing; that comparison is recorded as Lemma~\ref{lem:occupation} below (and is an equality when \(n=1\)). Then Theorem~\ref{thm:CET} is applied to the resulting right-hand side.

\begin{lemma}
\label{lem:occupation}
Let \(\varphi\) be smooth on \(\R^{n+1}_+\) with \(\Delta\varphi\ge 0\). Then, for each \(k=1,\dots,n\),
\begin{align*}
&\int_{\R^{n+1}_+}
\Delta\varphi(x,y)\Bigl((P_yf)^2+(P_y R_kf)^2\Bigr)\,2y\,dy\,dx
\\
&\qquad\le
\E\int_0^\tau
\Delta\varphi(B_s)\Bigl(M_f(s)^2+M_f^{(k)}(s)^2\Bigr)\,ds,
\end{align*}
whenever the left-hand side is finite. For \(n=1\) the two sides are equal. For \(n\ge 2\) the inequality is generally strict.
\end{lemma}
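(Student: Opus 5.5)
The plan is to treat the two squares separately. The first square is handled pathwise. The second is handled by a conditional expectation argument, in the form of the pairing identity announced as Lemma~\ref{lem:pairing}.

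\emph{The \(u\)-term.} In the background-radiation limit we have \(M_f(s)=u(B_s)\) pathwise. The occupation formula~\eqref{eq:occupation}, applied to \(F=\Delta\varphi\,u^2\ge 0\), then gives
\[
\int\Delta\varphi\,(P_yf)^2\,2y\,dy\,dx=\E\int_0^\tau\Delta\varphi(B_s)\,M_f(s)^2\,ds
\]
exactly. It therefore suffices to prove
\[
\int\Delta\varphi\, v_k^2\,2y\,dy\,dx\le\E\int_0^\tau\Delta\varphi(B_s)\,M_f^{(k)}(s)^2\,ds .
\]

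\emph{The pairing identity.} The key step is the following claim: for every nonnegative \(g\in C_c^\infty(\R^{n+1}_+)\),
\[
\E\int_0^\tau g(B_s)\,M_f^{(k)}(s)\,ds=\int_{\R^{n+1}_+} g\,v_k\,2y\,dy\,dx .
\]
Equivalently, \(\E[M_f^{(k)}(s)\mid B_s]=v_k(B_s)\) with respect to the occupation measure on \(\{s<\tau\}\).

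To prove it, let \(h\) be the Dirichlet Green potential of \(g\), so that \(-\tfrac12\Delta h=g\) and \(h=0\) on \(\{y=0\}\). Apply It\^o's formula to \(h(B_s)M_f^{(k)}(s)\). We have \(dh(B)=\nabla h\cdot dB-g(B)\,ds\) and \(dM_f^{(k)}=A_k\nabla u(B)\cdot dB\). Moreover \(h(B_\tau)=0\), and \(h(B_0)\to 0\) in the background-radiation limit. Taking expectations, the martingale terms vanish and we obtain
\[
\E\int_0^\tau g(B_s)M_f^{(k)}(s)\,ds=\E\int_0^\tau \nabla h\cdot A_k\nabla u(B_s)\,ds=\int \nabla h\cdot A_k\nabla u\;2y\,dy\,dx .
\]
The same computation with the harmonic function \(v_k\) in place of the martingale, using \(v_k(B_\tau)=R_kf\) and decay at infinity, gives \(\int g\,v_k\,2y=\int\nabla h\cdot\nabla v_k\,2y\). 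So the claim reduces to the deterministic identity
\[
\int_{\R^{n+1}_+} 2y\,\nabla h\cdot\bigl(A_k\nabla u-\nabla v_k\bigr)\,dy\,dx=0 .
\]

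\emph{The main obstacle.} This identity is the step I expect to be hardest. I would attack it by Fourier transform in \(x\), using \(\hat u=e^{-y\abs{\xi}}\hat f\) and \(\hat v_k=-i\frac{\xi_k}{\abs{\xi}}\hat u\). The difference vector field \(A_k\nabla u-\nabla v_k\) has vertical component \(-\partial_k u-\partial_y v_k\), \(k\)-th component \(\partial_y u-\partial_k v_k\), and components \(-\partial_\ell v_k\) for \(\ell\ne0,k\). I would integrate by parts in \(x\) and in \(y\); the weight \(2y\) generates a boundary-free lower-order term. Combining this with \(\Delta h=-2g\) and Plancherel in \(x\), I would check that the \(y\)-integral vanishes frequency by frequency. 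When \(n=1\), Cauchy--Riemann gives \(A_1\nabla u=\nabla v_1\) identically, so the integrand is zero. For \(n\ge2\), cancellation must come from the weighted integration by parts. If a direct check fails, I would instead verify the identity on the dense class of \(g\) of product form \(\psi(x)\chi(y)\), writing \(h\) explicitly via the Dirichlet heat/Poisson kernel in \(y\).

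\emph{Conclusion.} Given the pairing identity, we have \(v_k(B_s)=\E[M_f^{(k)}(s)\mid B_s]\) on the occupation measure. Conditional Jensen gives \(v_k(B_s)^2\le\E[M_f^{(k)}(s)^2\mid B_s]\). Multiplying by \(\Delta\varphi(B_s)\ge0\), integrating, and applying~\eqref{eq:occupation} yields the claimed inequality; finiteness of the left-hand side justifies the monotone limits.

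For the equality case \(n=1\), the relation \(A_1\nabla u=\nabla v_1\) gives \(M_f^{(1)}(s)=v_1(B_s)\) pathwise. For \(n\ge2\), \(A_k\nabla u\ne\nabla v_k\) in general, so \(M_f^{(k)}(s)\) is not \(B_s\)-measurable and Jensen's inequality is strict.
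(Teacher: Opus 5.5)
Your outline is the same as the paper's. The $u$-term cancels by $M_f=u(B)$ and the occupation formula. The $v_k$-term is handled by the pairing identity of Lemma~\ref{lem:pairing}, via the Dirichlet Green potential and It\^o; your product rule for $h(B)\,M_f^{(k)}$ is equivalent to the paper's stochastic Fubini step. The inequality then follows by orthogonality.

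The gap is that the deterministic identity $\int 2y\,\nabla h\cdot W\,dy\,dx=0$, with $W\coloneq A_k\nabla u-\nabla v_k$, is never proved. You give only a Fourier plan with a fallback, yet for $n\ge 2$ this identity is the whole content of the lemma. The missing observation is short:
\begin{itemize}
\item $\operatorname{div}(A_k\nabla u)=\partial_y\partial_{x_k}u-\partial_{x_k}\partial_y u=0$ and $\operatorname{div}\nabla v_k=\Delta v_k=0$, so $\operatorname{div}W=0$;
\item $W_y=\partial_{x_k}u-\partial_y v_k=0$ by the single Cauchy--Riemann relation $\partial_y v_k=\partial_{x_k}u$, read off from the multipliers.
\end{itemize}
Hence $\operatorname{div}(2yW)=2y\operatorname{div}W+2W_y=0$. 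One integration by parts on half-balls then gives $\int 2y\,\nabla h\cdot W=-\int h\,\operatorname{div}(2yW)=0$. There is no flux through $\{y=0\}$ because of the factor $y$, and none at infinity by the decay of $h$, $\nabla u$, $\nabla v_k$. This is exactly what the paper does when it reduces the two sides to $-2\int U\,\partial_{x_k}u$ and $-2\int U\,\partial_y v_k$.

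Your Fourier route would also close at once. $W$ is horizontal, and $\operatorname{div}_xW_x=-\partial_{x_k}\partial_y u-\Delta_x v_k=\partial_y(\partial_y v_k-\partial_{x_k}u)=0$. So $\int_{\R^n}\nabla h\cdot W\,dx=0$ on every horizontal slice, and the weight $2y$ plays no role.

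However, the plan as you wrote it would not succeed, because your components have the wrong sign. With $A_ke_k=e_0$ and $A_ke_0=-e_k$ one gets $A_k\nabla u=\partial_{x_k}u\,e_0-\partial_y u\,e_k$. So $W_y=\partial_{x_k}u-\partial_y v_k$ and $W_k=-\partial_y u-\partial_{x_k}v_k$. The components you list are instead those of $A_k^{T}\nabla u-\nabla v_k$. For that field the vertical component is $-2\partial_{x_k}u$, and the same integration by parts shows the integral equals $4\int h\,\partial_{x_k}u\,dy\,dx$, which is nonzero in general. For the same reason, your assertion $A_1\nabla u=\nabla v_1$ for $n=1$ is inconsistent with the components you wrote.

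Finally, your conditional-Jensen conclusion is sound in substance. But making $\E[M_f^{(k)}(s)\mid B_s]=v_k(B_s)$ rigorous on the infinite occupation measure requires extending the pairing beyond $C_c^\infty$. The paper sidesteps this by testing directly with $g=\eta_\ell\,\Delta\varphi\,v_k$. That kills the cross term and leaves $\E\int_0^\tau\eta_\ell\Delta\varphi\,(M_f^{(k)}-v_k(B))^2\,ds\ge0$.
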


\subsection{Reduction}

By the occupation formula the left-hand side of Lemma~\ref{lem:occupation} equals
\[
\E\int_0^\tau
\Delta\varphi(B_s)\bigl(u(B_s)^2+v(B_s)^2\bigr)\,ds.
\]
The \(u\)-summands cancel against \(M_f(t)=u(B_t)\), and the claim is equivalent to
\[
\E\int_0^\tau\Delta\varphi(B_s)\,v(B_s)^2\,ds
\le
\E\int_0^\tau\Delta\varphi(B_s)\,M_f^{(k)}(s)^2\,ds.
\]
If \(n=1\), then \(R_1\) is the Hilbert transform and \((u,v)\) satisfy the Cauchy--Riemann system, so \(\nabla v=A_1\nabla u\) pointwise. It\^o for \(v\) then yields \(M_f^{(1)}(t)=v(B_t)\) pathwise, and the reduced comparison is an equality. The rest of the section treats general \(n\).

\subsection{A pairing identity}

Write \(V=A_k\nabla u\). Then \(\operatorname{div} V=0\) and the vertical component of \(V\) is \(V_y=\partial_{x_k}u\). The function \(v\) is harmonic, so It\^o likewise gives
\[
v(B_t)=\int_0^t\nabla v(B_s)\cdot dB_s
\]
in the same limit. The vector fields \(V\) and \(\nabla v\) are not equal when \(n\ge 2\): only their vertical components coincide. The comparison is obtained from a weaker statement, namely that \(M_f^{(k)}\) and \(v(B)\) have the same occupation pairing against functions of position.

\begin{lemma}[Gundy--Varopoulos representation on the upper half space]
\label{lem:pairing}
Let \(g\in C_c^\infty(\R^{n+1}_{+})\). Then
\[
\E\int_0^\tau M_f^{(k)}(s)\,g(B_s)\,ds
=
\E\int_0^\tau v(B_s)\,g(B_s)\,ds.
\]
\end{lemma}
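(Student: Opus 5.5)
Both sides of the identity are linear in the integrand: the left is built from the martingale with integrand \(V=A_k\nabla u\), the right from the one with integrand \(\nabla v\). So the plan is to express each side as \(\int_{\R^{n+1}_+}\nabla G\cdot W\,2y\,dy\,dx\) for a suitable potential \(G\), and then show that only the vertical component of \(W\) contributes. That component is \(\partial_{x_k}u\) in both cases, which gives the identity.

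\textbf{Step 1: dualise the time integral.} Set \(G(x,y)=\E^{x,y}\int_0^\tau g(B_s)\,ds\), the Dirichlet Green potential of \(g\). For \(g\in C_c^\infty\) we have \(G\ge0\) smooth, \(\tfrac12\Delta G=-g\), \(G=0\) on \(\{y=0\}\), and \(G\) decays at infinity. The Markov property gives \(\E[\int_s^\tau g(B_r)\,dr\mid\cF_s]=G(B_s)\). Hence, for any \(L^2\) martingale \(Z_t=\int_0^t W(B_r)\cdot dB_r\) with \(Z_0=0\), Fubini and conditioning yield
\[
\E\int_0^\tau Z_s\,g(B_s)\,ds=\E\Bigl[Z_\tau\int_0^\tau g(B_s)\,ds\Bigr]=\E\bigl[Z_\tau\,\tilde N_\tau\bigr].
\]
Here \(\tilde N_t=G(B_{t\wedge\tau})+\int_0^{t\wedge\tau}g(B_s)\,ds=G(B_0)+\int_0^{t\wedge\tau}\nabla G(B_s)\cdot dB_s\). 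The martingale \(Z\) starts at \(0\), so the constant \(G(B_0)\) drops out; in the background-radiation limit it tends to \(0\) anyway. By the isometry and the occupation formula~\eqref{eq:occupation},
\[
\E\int_0^\tau Z_s\,g(B_s)\,ds=\E\int_0^\tau \nabla G\cdot W(B_s)\,ds=\int_{\R^{n+1}_+}\nabla G\cdot W\,2y\,dy\,dx.
\]
I apply this with \(W=A_k\nabla u\) and with \(W=\nabla v\). The second case uses the representation \(v(B_t)=\int_0^t\nabla v\cdot dB\) already recorded.

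\textbf{Step 2: compare the two integrals.} Write \(D=A_k\nabla u-\nabla v\). It is divergence free, because \(\operatorname{div}A_k\nabla u=0\) and \(\Delta v=0\), and its vertical component vanishes, since \(\partial_y v=\partial_{x_k}u\). The latter is the identity \(\partial_y P_y R_kf=\partial_{x_k}P_yf\), checked on the Fourier side: both equal \(-i\xi_k e^{-y\abs{\xi}}\hat f\) up to sign conventions, with the sign fixed by the convention defining \(A_k\). I then need \(\int\nabla G\cdot D\,y\,dy\,dx=0\). Integrating by parts in the horizontal variables only, and using that \(D\) has no \(y\)-component,
\[
\int \nabla G\cdot D\,y=\int \nabla_x G\cdot D_x\,y=-\int G\,(\operatorname{div}_x D_x)\,y=-\int G\,(\operatorname{div}D)\,y=0.
\]
There are no boundary terms in \(x\), by decay of \(G\) and of the Poisson extensions of Schwartz functions. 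The weight \(y\) causes no trouble because no \(y\)-derivative is moved. This step is purely algebraic.

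\textbf{Main obstacle: justification in the background-radiation limit.} The expected difficulty is analytic, not algebraic. One must make rigorous that \(G(B_0)\to0\) as the starting height \(a\to\infty\), and that the integrability needed for Fubini holds, namely \(\int\abs{\nabla G}\abs{\nabla u}y<\infty\). One also needs the decay of \(G(x,y)\) as \(\abs{x}\to\infty\) used in the horizontal integration by parts. I would handle these with explicit Green function bounds for the half-space. Since \(g\) is compactly supported, \(G\) is \(O(y\,\abs{(x,y)}^{-n-1})\) at infinity and \(\nabla G\) is bounded. Combined with \(\abs{\nabla u}\lesssim (1+\abs{(x,y)})^{-n-1}\) for Schwartz \(f\), this makes all integrals absolutely convergent. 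I would verify these bounds first, working from a finite starting height \(a\), and then pass to the limit.
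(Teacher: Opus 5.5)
Your proof is correct, but it is organised differently from the paper's.

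\textbf{The paper's route.} It treats the two sides asymmetrically. The left side goes through stochastic Fubini to $\int 2y\,V\cdot\nabla U$. The divergence theorem on half-balls, with $\operatorname{div}(yV)=V_y=\partial_{x_k}u$, then gives $-2\int U\,\partial_{x_k}u$. The right side goes directly through the occupation formula, $\tfrac12\Delta U=-g$, and Green's second identity for the pair $(U,yv)$ with $\Delta(yv)=2\partial_y v$. This gives $-2\int U\,\partial_y v$, and the relation $\partial_y v=\partial_{x_k}u$ matches the two.

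\textbf{Your route.} You run the same martingale pairing on both sides, so both become $\int 2y\,\nabla G\cdot W$. You then dispose of the difference $D=A_k\nabla u-\nabla v$ by noting it is divergence free and purely horizontal. A slice-by-slice integration by parts in $x$ alone therefore suffices.

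\textbf{What each approach buys.} Your final step is cleaner. The weight $y$ is never differentiated, and no boundary term on $\{y=0\}$ or on the hemispheres $S_R$ needs examining; only decay in $x$ at fixed $y$ is used. It also makes transparent that the pairing sees only the vertical component of a divergence-free integrand, which is the point of the remark after the lemma. The price is that you need $v(B)$ itself to be the stochastic integral $\int\nabla v\cdot dB$ in the background-radiation limit, together with $\int y\abs{\nabla v}^2<\infty$. The paper's treatment of the right side uses only the occupation formula and never needs $v(B)$ to be a martingale.

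\textbf{One correction to your side remarks.} Under the horizontal Lebesgue initial measure,
\[
\E[G(B_0)]=\E\int_0^\tau g(B_s)\,ds=\int 2\min(y,a)\,g,
\]
which does not tend to $0$ as $a\to\infty$. So the constant $G(B_0)$ drops out only because $Z_0=0$, as you first said. The limit term that genuinely must vanish is the initial value of $v$. At height $a$ the right side carries the extra term $\E[v(B_0)G(B_0)]=\int_{\R^n}v(x,a)G(x,a)\,dx$. This tends to $0$ because $\sup_x\abs{v(x,a)}\to0$ while $\int_{\R^n}G(x,a)\,dx$ stays bounded.
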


\begin{proof}
We first work on the left-hand side.
Let \(U\) be the Dirichlet Green potential of \(g\) for the generator \(\tfrac12\Delta\), defined by
\[
U(z)=\E^z\int_0^\tau g(B_s)\,ds,
\]
with \(B\) Brownian motion of generator \(\tfrac12\Delta\), started at \(z\), killed when it hits \(\{y=0\}\). Equivalently,
\[
\tfrac12\Delta U=-g\quad\text{in }\R^{n+1}_+,
\qquad U=0\text{ on }\{y=0\}.
\]
Write \(d=n+1\) and let \(w^*\) be the reflection of \(w\) across \(\{y=0\}\).
The Dirichlet Green function of the half-space for \(\tfrac12\Delta\) is
\(G(z,w)=2\bigl(\Gamma(z-w)-\Gamma(z-w^*)\bigr)\), where \(\Gamma\) is the
free-space fundamental solution of \(\Delta\Gamma=-\delta\), that is, \(\Gamma(z)=c_d\abs{z}^{2-d}\) for \(d\ge 3\), and
\(\Gamma(z)=(1/2\pi)\log(1/\abs{z})\) for \(d=2\).
Thus
\[
U(z)=\int_{\R^{n+1}_+}G(z,w)\,g(w)\,dw.
\]
The support of \(g\) is compact in \(\{y>0\}\), so it is enough to expand
\(G(z,w)\) for \(w\) in a fixed compact set and \(\abs{z}\to\infty\).

If \(d\ge 3\), Taylor expansion of \(\Gamma\) about \(z\) gives
\[
\Gamma(z-w)-\Gamma(z-w^*)
=\nabla\Gamma(z)\cdot(w^*-w)+O\bigl(\abs{z}^{-d}\bigr).
\]
Here \(w^*-w=-2\eta\,e_0\) if \(w=(\xi,\eta)\), and
\(\nabla\Gamma(z)=c\,z/\abs{z}^d\), so the leading term is a multiple of
\(\eta y/\abs{z}^{n+1}\). Hence \(G(z,w)=O\bigl(y/\abs{z}^{n+1}\bigr)\), uniformly
for \(w\in\operatorname{supp}g\).

If \(d=2\), that is \(n=1\),
\[
G(z,w)=\frac1\pi\log\frac{\abs{z-w^*}}{\abs{z-w}}.
\]
The expansion
\(\log\abs{z-\zeta}=\log\abs{z}-z\cdot\zeta/\abs{z}^2+O(\abs{z}^{-2})\)
makes the two copies of \(\log\abs{z}\) cancel, and one is left with
\[
\log\frac{\abs{z-w^*}}{\abs{z-w}}
=\frac{2\eta y}{\abs{z}^2}+O\bigl(\abs{z}^{-2}\bigr).
\]
Again \(G(z,w)=O\bigl(y/\abs{z}^{2}\bigr)=O\bigl(y/\abs{z}^{n+1}\bigr)\), with no residual logarithm.
Differentiating the same expansions in \(z\) yields, for \(w\in\operatorname{supp}g\),
\[
\abs{\nabla_z G(z,w)}=O\bigl(\abs{z}^{-(n+1)}\bigr).
\]
Integrating against \(g\) we obtain
\[
U(z)=O\bigl(y/\abs{z}^{n+1}\bigr),
\qquad
\abs{\nabla U(z)}=O\bigl(\abs{z}^{-(n+1)}\bigr)
\qquad(\abs{z}\to\infty).
\]

We apply It\^o formula along a path to the process \(t\mapsto U(B_t)\) up to \(\tau\). Because the generator is \(\tfrac12\Delta\),
\[
dU(B_t)=\nabla U(B_t)\cdot dB_t+\tfrac12\Delta U(B_t)\,dt.
\]
Substitute \(\tfrac12\Delta U=-g\):
\[
dU(B_t)=\nabla U(B_t)\cdot dB_t-g(B_t)\,dt.
\]
Integrate from a time \(r\le\tau\) to \(\tau\):
\[
U(B_\tau)-U(B_r)
=\int_r^\tau\nabla U(B_s)\cdot dB_s-\int_r^\tau g(B_s)\,ds.
\]
On the boundary one has \(U=0\), and \(B_\tau\) is on the boundary, so \(U(B_\tau)=0\). Rearranging gives
\[
\int_r^\tau g(B_s)\,ds
=U(B_r)+\int_r^\tau\nabla U(B_s)\cdot dB_s.
\]

We now pair martingales. Recalling \(M_f^{(k)}(s)=\int_0^s V(B_r)\cdot dB_r\), with
\(V=A_k\nabla u\), stochastic Fubini \cite{Protter2003} and the identity above give
\begin{align*}
\E\int_0^\tau M_f^{(k)}(s)\,g(B_s)\,ds
&= \E\int_0^\tau\Bigl(\int_0^s V(B_r)\cdot dB_r\Bigr)g(B_s)\,ds \\
&= \E\int_0^\tau V(B_r)\cdot dB_r\cdot\Bigl(\int_r^\tau g(B_s)\,ds\Bigr)\\
&=\E\int_0^\tau V(B_r)\cdot dB_r\cdot
\Bigl(U(B_r)+\int_r^\tau\nabla U(B_s)\cdot dB_s\Bigr)\\
&=\E\int_0^\tau V(B_r)\cdot\nabla U(B_r)\,dr\\
&=\int_{\R^{n+1}_+} 2y\,V(x,y)\cdot\nabla U(x,y)\,dx\,dy.
\end{align*}

We now use the divergence theorem on a compact half-ball to write this last integral. Write \(z=(x,y)\) and
\[
\Omega_R=\bigl\{(x,y):y>0,\ \abs{z}<R\bigr\}.
\]
Write \(dw=dx\, dy\), then
\[
\int_{\Omega_R}2y\,V\cdot\nabla U\,dw
=-\int_{\Omega_R}U\,\operatorname{div}(2y\,V)\,dw
+\int_{\partial\Omega_R}U\,(2y\,V\cdot\nu)\,dS,
\]
where \(\nu\) is the outward unit normal to \(\Omega_R\). The boundary has two pieces.
On the bottom disk \(D_R=\{y=0,\ \abs{x}<R\}\),
the outward normal is \(\nu=-e_y\), so
\[
2y\,V\cdot\nu=2y\,V\cdot(-e_y).
\]
The prefactor \(y\) is identically zero on \(D_R\), hence this flux is zero.
On the hemisphere \(S_R=\{\abs{z}=R,\ y>0\}\),
the outward normal is \(\nu=z/\abs{z}\), so the flux is
\[
\int_{S_R}U(z)\,2y\,V(z)\cdot\frac{z}{\abs{z}}\,dS(z).
\]
That is the only remaining boundary term:
\[
\int_{\Omega_R}2y\,V\cdot\nabla U\,dw
=-2\int_{\Omega_R}U\,\operatorname{div}(yV)\,dw
+\int_{S_R}2y\,U\bigl(V\cdot\nu\bigr)\,dS.
\]
Let \(R\to\infty\). On \(S_R\) the bounds recorded above give
\(\abs{U}=O(y/R^{n+1})\). Since \(f\) is Schwartz, the Poisson extension \(u\)
satisfies \(\abs{\nabla u(z)}=O(\abs{z}^{-(n+1)})\), hence \(\abs{V}=O(R^{-(n+1)})\).
Taking into account that \(y\le R\) and the fact that the $n$-dimensional volume of $S_{R}$ is  \(O(R^n)\), we see that the integrand of the
hemisphere flux is \(O(R^{-n})\), so the integral tends to \(0\).

Therefore
\[
\int_{\R^{n+1}_+}2y\,V\cdot\nabla U\,dw
=-2\int_{\R^{n+1}_+}U\,\operatorname{div}(yV)\,dw
=-2\int_{\R^{n+1}_+}U\,\partial_{x_k}u\,dw,
\]
since \(\operatorname{div} V=0\) and \(V_y=\partial_{x_k}u\), so \(\operatorname{div}(yV)=\partial_{x_k}u\). In summary, for the left-hand side we obtain
\[
\E\int_0^\tau M_f^{(k)}(s)\,g(B_s)\,ds=-2\int_{\R^{n+1}_+}U\,\partial_{x_k}u\,dw.
\]

We turn to the right-hand side. The occupation formula~\ref{eq:occupation} and \(\tfrac12\Delta U=-g\) give
\[
\E\int_0^\tau v(B_s)\,g(B_s)\,ds
=\int_{\R^{n+1}_+} 2y\,v\bigl(-\tfrac12\Delta U\bigr)\,dw
=-\int_{\R^{n+1}_+} yv\,\Delta U\,dw.
\]
Green's identity for the pair \((U,yv)\) on the same truncated region \(\Omega_R\) reads
\[
\int_{\Omega_R}\bigl(U\,\Delta(yv)-yv\,\Delta U\bigr)\,dw
=\int_{\partial\Omega_R}\bigl(U\,\partial_\nu(yv)-yv\,\partial_\nu U\bigr)\,dS.
\]
On the bottom disk \(D_R=\{y=0,\,\abs{x}<R\}\) one has \(U=0\) and \(yv=0\) as traces, so the integrand vanishes.
On the hemisphere \(S_R\), the same bounds give
\(\abs{U}=O(R^{-n})\) and \(\abs{\nabla U}=O(R^{-(n+1)})\).
Since \(f\) is Schwartz, \(R_kf(x)=O(\abs{x}^{-n})\) at infinity, and the
Poisson extension therefore satisfies
\(\abs{v(z)}+ \abs{z}\cdot\abs{\nabla v(z)}=O(\abs{z}^{-n})\).
Thus \(\abs{yv}=O(R^{1-n})\) and \(\abs{\nabla(yv)}=O(R^{-n})\) on \(S_R\).
The integrand \(U\,\partial_\nu(yv)-yv\,\partial_\nu U\) is \(O(R^{-2n})\),
the $n$-dimensional area is \(O(R^n)\), and the flux tends to \(0\) as \(R\to\infty\).

Therefore
\[
\int_{\R^{n+1}_+}yv\,\Delta U\,dw
=\int_{\R^{n+1}_+}U\,\Delta(yv)\,dw.
\]
The product rule and harmonicity of \(v\) give \(\Delta(yv)=2\partial_yv\). Combined with occupation and \(\tfrac12\Delta U=-g\), we summarize for the right-hand side
\[
\E\int_0^\tau v(B_s)\,g(B_s)\,ds
=-2\int_{\R^{n+1}_+}U\,\partial_yv\,dw.
\]

To compare the left- and right-hand sides, note the identity \(\partial_yv=\partial_{x_k}u\), which is immediately seen to be true by checking it on the Fourier side. Indeed: with the multiplier \(\widehat{R_kf}(\xi)=-i\xi_k/\abs{\xi}\,\hat f(\xi)\) one has
\[
\hat u(\xi,y)=e^{-y\abs{\xi}}\hat f(\xi),
\qquad
\hat v(\xi,y)=e^{-y\abs{\xi}}\Bigl(-i\frac{\xi_k}{\abs{\xi}}\Bigr)\hat f(\xi),
\]
hence
\[
\widehat{\partial_{x_k}u}(\xi,y)=i\xi_k\,e^{-y\abs{\xi}}\hat f(\xi)
=\widehat{\partial_yv}(\xi,y).
\]
Equivalently, if \(Q_y^{(k)}\) denotes the conjugate Poisson kernel, then \(\partial_{x_k}P_y=\partial_y Q_y^{(k)}\) pointwise, and \(v=Q_y^{(k)}*f\) is the choice of conjugate that matches the matrix \(A_k\).
\end{proof}

\begin{remark}
That identity is the only Cauchy--Riemann relation used. The remaining slots of \(\nabla v\) do not match those of \(A_k\nabla u\) when \(n\ge 2\), and are never invoked.
\end{remark}

\subsection{Conclusion of the proof}

Let \(0\le\eta_{\ell}\leq 1\) be smooth, compactly supported cutoff functions on $\R^{n+1}_{+}$ that increase pointwise to $1$, and set \(g_\ell=\eta_\ell\Delta\varphi\,v\). Lemma~\ref{lem:pairing} applied to each \(g_\ell\) yields
\[
\E\int_0^\tau\eta_\ell\Delta\varphi(B_s)\,v(B_s)\bigl(M_f^{(k)}(s)-v(B_s)\bigr)\,ds
=0.
\]
Expanding the squares,
\[
\E\int_0^\tau\eta_\ell\Delta\varphi\bigl(M_f^{(k)\,2}-v(B)^2\bigr)\,ds
=\E\int_0^\tau\eta_\ell\Delta\varphi\bigl(M_f^{(k)}-v(B)\bigr)^2\,ds
\ge 0,
\]
hence
\[
\E\int_0^\tau\eta_\ell\Delta\varphi\,v(B)^2\,ds
\le
\E\int_0^\tau\eta_\ell\Delta\varphi\,M_f^{(k)\,2}\,ds
\le
\E\int_0^\tau\Delta\varphi\,M_f^{(k)\,2}\,ds.
\]
Monotone convergence on the left, using that the left-hand side of the original display is finite, gives the reduced comparison. Combined with \(M_f=u(B)\), this is Lemma~\ref{lem:occupation}.

\begin{remark}
    If \(\Delta\varphi\) is itself compactly supported in \(\{y>0\}\), the cutoff is unnecessary. The hypothesis that the left-hand side is finite may be replaced by the assumption that both sides are well-defined in \([0,+\infty]\); if the right-hand side is infinite there is nothing to prove.
\end{remark}

\subsection{Adding all Riesz transforms}
\label{sec:riesz}

This consists in applying the pairing lemma to each index separately and adding. We explain the details.
Write \(v_k=P_y(R_kf)\) and \(V^{(k)}=A_k\nabla u\). For every \(k\) one still has \(\operatorname{div} V^{(k)}=0\), \(V^{(k)}_y=\partial_{x_k}u\), and the same Cauchy--Riemann identity
\[
\partial_y v_k=\partial_{x_k}u.
\]
Lemma~\ref{lem:pairing} therefore holds for each \(k\):
\[
\E\int_0^\tau\bigl(M_f^{(k)}(s)-v_k(B_s)\bigr)\,g(B_s)\,ds=0
\]
for all \(g\in C_c^\infty(\R^{n+1}_+)\).
Taking \(g=\eta_\ell\Delta\varphi\,v_k\) and summing in \(k\) produces no mixed terms \(M_f^{(k)}v_\ell\) with \(k\neq \ell\). Hence
\begin{align*}
&\E\int_0^\tau\Delta\varphi(B_s)\sum_{k=1}^n\bigl(M_f^{(k)}(s)^2-v_k(B_s)^2\bigr)\,ds
\\
&\qquad=
\E\int_0^\tau\Delta\varphi(B_s)\sum_{k=1}^n\bigl(M_f^{(k)}(s)-v_k(B_s)\bigr)^2\,ds
\ge 0.
\end{align*}
Together with \(M_f(t)=u(B_t)\) this is
\begin{equation}
\label{eq:vector-occupation}
\begin{aligned}
&\int_{\R^{n+1}_+}
\Delta\varphi\Bigl((P_yf)^2+\sum_{k=1}^n(P_y R_kf)^2\Bigr)\,2y\,dy\,dx
\\
&\qquad\le
\E\int_0^\tau
\Delta\varphi(B_s)\Bigl(M_f(s)^2+\sum_{k=1}^n M_f^{(k)}(s)^2\Bigr)\,ds,
\end{aligned}
\end{equation}
under the same hypotheses as Lemma~\ref{lem:occupation}, that is, \((f\) is  Schwartz function On $\R^n$, \(\Delta\varphi\ge 0\), and the left-hand side finite.
No other changes are needed in the argument. In particular one still does not need \(\nabla v_k=A_k\nabla u\), nor any relation between \(\sum_k\abs{A_k\nabla u}^2\) and \(\sum_k\abs{\nabla v_k}^2\). Those quadratic forms are actually not equal for \(n\ge 2\): \(\sum_k\abs{A_k\nabla u}^2=\abs{\nabla_x u}^2+n(\partial_y u)^2\).

The density \(\alpha_s=\Delta\varphi(B_s)\) satisfies~\eqref{eq:C} by assumption, and \(N\) admits a representation of the form~\eqref{eq:N-rep} with respect to the coordinate processes of \(B\). The right-hand side of~\eqref{eq:vector-occupation} is therefore in the shape of the left-hand side of~\eqref{eq:CET} for the system \(M_f,M_f^{(1)},\dots,M_f^{(n)}\). Theorem~\ref{thm:CET} yields~\eqref{eq:riesz-CE}, which is exactly Theorem~\ref{thm:riesz-CE}.

\subsection{Remarks}
A few remarks are in order.

\begin{enumerate}
    \item \textbf{Equality.} For \(n=1\) one has \(\nabla v=A_1\nabla u\), so \(M_f^{(1)}(t)=v(B_t)\) pathwise and the original display is an equality. For \(n\ge 2\) the integrand \(\bigl(M_f^{(k)}-v(B)\bigr)^2\) is not identically zero for generic \(f\), and the inequality is strict for generic \(\varphi\).

    \item \textbf{What the pairing does not say.} Lemma~\ref{lem:pairing} asserts that \(M_f^{(k)}\) and \(v(B)\) have the same occupation pairing against functions of position. It does not assert the process identity \(\E\bigl[M_f^{(k)}(s)\bigm|B_s\bigr]=v(B_s)\), and that identity is not used.

    \item \textbf{What is not used.} The argument does not identify \(\nabla v\) with \(A_k\nabla u\). It does not apply the tower property to the non-nested pair of $\sigma$-algebras \(\sigma(B_s)\) and \(\sigma(B_\tau)\). It does not treat \(W=M_f^{(k)}(\tau)-R_kf(B_\tau)\) as a functional of the future path after time \(s\). It does not introduce an occupation density \(\mu\) and invoke Liouville uniqueness for \(y(\mu-v)\).

    \item \textbf{Finite starting height.} At finite starting height \(a\), occupation is given by the density \(2\min(y,a)\,dx\,dy\) rather than \(2y\,dx\,dy\), and \(M_f(t)=u(B_t)-u(B_0)\). For test functions supported in the truncated domain \(\R^n\times\{y\le a/2\}\) the occupation converges to \(2y\), and the missing incoming integral from infinity to height \(a\) is exponentially small in \(L^2\) for Schwartz \(f\). This explains why we passed to the \(a\to\infty\) limit in which the occupation formula with density \(2y\) holds.

    \item \textbf{Sings.} The matrix \(A_k\) is the one for which \(A_k\nabla u\cdot dB=\partial_{x_k}u\,dY-\partial_y u\,dX^k\), matching~\eqref{eq:uk}. With that choice, \(\partial_yv=\partial_{x_k}u\) holds as written. The same computation with \(V=\nabla u\) recovers the tautology \(M_f=u(B)\).
\end{enumerate}

\section*{AI disclosure statement}
We have used Grok 4.6, built by xAI, to accelerate the write-up, and to help for Lemma~\ref{lem:pairing}.
Otherwise all ideas, models and the strategy of proofs are ours.

\end{document}